\newtheorem{theorem}{Theorem}
\theoremstyle{plain}
\newtheorem{theo}{Theorem}[section]
\newtheorem{lemma}[theorem]{Lemma}
\newtheorem{prop}[theorem]{Proposition}
\theoremstyle{definition}
\theoremstyle{remark}
\newtheorem{remark}[theorem]{Remark}
\numberwithin{equation}{section}
\DeclareMathOperator{\md}{mod}
\begin{document}

\title[Ergodicity on 2-adic spheres]{Ergodicity criteria for non-expanding transformations of 2-adic spheres}

\author{Vladimir Anashin}
\author{Andrei Khrennikov} 
\author{Ekaterina Yurova}
\thanks{The first of the authors is supported in parts by Russian Foundation for Basic
Research grant 12-01-00680-a and by  Chinese Academy of Sciences visiting professorship for senior international
scientists grant No  2009G2-11.
The second and the third of the authors are supported by the joint grant of Swedish and South-African Research Councils ``Non-Archimedean analysis: from fundamentals 
to applications.''}

%
%
%

\begin{abstract}
In the  paper, we obtain necessary and sufficient conditions for ergodicity
(with respect to the normalized Haar measure) of discrete dynamical systems $\langle f;\mathbf S_{2^{-r}}(a)\rangle$
on 2-adic spheres $\mathbf S_{2^{-r}}(a)$ of radius $2^{-r}$, $r\ge 1$, centered
at some point $a$ from the ultrametric space of 2-adic integers $\mathbb Z_2$.  The map $f\colon\mathbb Z_2\to\mathbb Z_2$ is assumed to be non-expanding
and measure-preserving; that is, $f$ satisfies
a Lipschitz condition with a constant 1 with respect to the 2-adic metric,
and $f$ preserves a natural probability measure on $\mathbb Z_2$, the Haar measure $\mu_2$ on $\mathbb Z_2$ which is normalized so that $\mu_2(\mathbb
Z_2)=1$.
\end{abstract}

\maketitle
 
 \section{Introduction}
 
 Algebraic and arithmetic dynamics are actively developed fields of general theory of dynamical systems, see \cite{Vivaldi} for extended 
bibliography, also monographs \cite{Silverman,ANKH}. Theory of dynamical systems in $p$-adic fields ${\mathbb Q}_p,$ where $p\geq 2$ is a prime number,  is  an important part of 
algebraic and arithmetic dynamics, see, e.g., \cite{Albeverio1,Albeverio2,Vivaldi1,Vivaldi2}; also 
\cite{Vivaldi,Khrennikov and Nilsson}, and \cite{ANKH} for further references. As in general theory of dynamical systems, problems  of measure preservation
and ergodicity play fundamental role in
the theory of $p$-adic dynamical systems, see e.g. \cite{ANKH,Erg,Arrowsmith1,Arrowsmith2,Gundlach1,Gundlach2,Khrennikov and Nilsson,Parry,Benedetto1,Benedetto2,Benedetto3,Benedetto4,Chabert,Fan3,Rivera1,Rivera2,Rivera3,Rivera4,Lindhal,De Smedt1}. 

The case of  non-expanding dynamics (the ones that satisfy a Lipschitz
condition with a constant 1, a 1-Lipschitz for short) 
on the ring $\mathbb Z_p$ of $p$-adic integers  is sufficiently well studied
\cite{Unif0,Unif,Fan1,Fan2},
see also \cite{ANKH} and references therein.
However, it is not so much known about the dynamics in domains different from
$\mathbb Z_p$ 
although the later dynamics can be useful 
in applications to computer science (e.g. in computer simulations,
numerical methods like Monte-Carlo, cryptography) and to mathematical physics,
see \cite{ANKH}, \cite{me-NATO} and \cite{Khrennikov and Nilsson}. Dynamical systems on $p$-adic spheres are an interesting and nontrivial
example of the dynamics.  The first result in this direction, namely, the ergodicity criterion for monomial dynamical systems on $p$-adic spheres, was obtained in \cite{Gundlach1,Gundlach2}. It deserves a note that although
these dynamical systems are a $p$-adic counterpart of a classical dynamical
systems, circle rotations,
in the $p$-adic case the dynamics exhibit
quite another behavior than the classical one.
Later the case of monomial dynamical systems on $p$-adic spheres was significantly
extended: In \cite{Erg}, ergodicity criteria for locally analytic dynamical systems on $p$-adic spheres were obtained, for arbitrary prime $p$. 
 
In the current paper, we consider 
an essentially wider class of dynamics than in \cite{Erg}, namely, the class of all 1-Lipschitz dynamical systems; however, on 
2-adic spheres only: We find necessary and sufficient conditions for ergodicity of these dynamical systems, see further Theorem \ref{thm:ergsph}. Then with
the use of the criterion we find necessary and sufficient conditions for ergodicity of perturbed monomial dynamical systems on 2-adic spheres around
1 in the case when perturbations are 1-Lipschitz and 2-adically small
(Theorem \ref{thm:pert}). Earlier similar results were known only under
additional restriction (however,
for arbitrary prime $p$) that
perturbations are smooth, cf. \cite{Erg} and \cite[Section 4.7]{ANKH}. In this connection it should be noticed that transition of  results of the present paper for arbitrary prime $p$ seems to be a non-trivial task:
It is well known that 
in $p$-adic analysis cases of even and odd primes differ essentially. 

Our basic technique is van der Put series.
 The van der Put series were primarily known only as a tool to find antiderivatives (see  \cite{vdp,Mahler,Shihov}); recently  by using the series the authors in \cite{DAN,AKY,Ya}
developed a new technique  to determine whether a 1-Lipschitz transformation is
measure-preserving and/or ergodic on $\mathbb Z_p$. Our approach
seems to be fruitful: 
The analog of the techniques was successfully applied to determine ergodicity
of 1-Lipschitz transformations on another complete non-Archimedean ring, the ring
$\mathbb F_2[[X]]$
of formal power series over a two-element field $\mathbb F_2$, see recent
paper \cite{LinTaoYang}.
 
We remark that as the mappings under consideration are in general not differentiable, it is impossible to apply the technique based on expansion into power series to  the case under consideration as the said technique can be
used for analytical and smooth dynamical systems only, see e.g. \cite{Erg}. The van der Put basis is much better adapted 
to studies of non-smooth dynamics: A special  collection of step-like functions,
characteristic functions of balls, constitutes the basis.
The 
van der Put basis reflects the ultrameric (non-Archimedean) structure of
$p$-adic numbers, \cite{Mahler,Shihov}. We note that in the $p$-adic case the linear space consisting of linear combinations of step-like functions is a dense subspace of the space of continuous functions, \cite{Shihov}.

The 2-adic spheres are a special case of $p$-adic spheres; as a matter of
fact, 2-adic
spheres are 2-adic balls: Denote the $p$-adic absolute value via $|~|_p$;
then, as  a $p$-adic sphere 
$\mathbf S_{p^{-r}}(a)=\{z\in\mathbb Z_p\colon|z-a|_p=p^{-r}\}$ of radius $p^{-r}$ centered at $a\in\mathbb Z_p$ is a disjoint union of $p-1$ balls
$\mathbf B_{p{-r-1}}(b)=\{z\in\mathbb Z_p\colon|z-b|_p\le p^{-r-1}\}=b+p^{r+1}\mathbb Z_p$
of radii $p^{-r-1}$ cantered at $b\in\{a+p^rs\colon s=1,2,\ldots,p-1\}$,
in the case $p=2$ we get that $\mathbf S_{2^{-r}}(a)=\mathbf B_{2^{-r-1}}(a+2^r)$. Fortunately, 
the problem to determine ergodicity  of a 1-Lipschitz transformation on a ball $\mathbf B_{p^{-k}}(a)=a+p^k\mathbb Z_p\subset\mathbb Z_p$ can be reduced
to the same problem on the whole space $\mathbb Z_p$. 

Indeed, if $f$ is a 1-Lipschitz transformation 
such that $f(a+p^k\mathbb Z_p)\subset a+p^k\mathbb Z_p$, then necessarily $f(a)=a+p^ky$ for
a suitable $y\in\mathbb Z_p$. 
Thus, $f(a+p^kz)=f(a)+p^k\cdot u(z)$ for any $z\in\mathbb Z_p$; so we can relate to $f$ the following 1-Lipschitz transformation on $\mathbb Z_p$:
$$
u\colon z\mapsto u(z)=\frac{1}{p^k}(f(a+p^kz)-a-p^ky); \ \ z\in\mathbb Z_p.
$$
It can be shown that the transformation $f$ is ergodic
on the ball $\mathbf B_{p^{-k}}(a)$  if and only if the transformation  $u$
is ergodic on $\mathbb Z_p$.  To determine ergodicity of a transformation
on the space $\mathbb Z_p$ various techniques may be used, see \cite{ANKH} for details. In
the paper, we exploit a version of the idea described above to reduce the case of ergodicity on  2-adic spheres to
the case of ergodicity on the whole space $\mathbb Z_2$ (cf. further Proposition \ref{prop:ball-space}),
and we use van
der Put series for the latter study since the series turned out
to be the
most effective technique in the case when $p=2$, cf.  \cite{DAN,AKY,LinTaoYang,Ya}.
 
\section{Preliminaries}  
We remind that  $p$-adic absolute
value satisfies {\it strong triangle inequality}:
$$
\vert x+y\vert_p \leq \max \{\vert x\vert_p, \vert y\vert_p\}.
$$
The $p$-adic absolute value induces ($p$-adic) metric on $\mathbb Z_p$ in
a standard way: given $a,b\in\mathbb Z_p$, the $p$-adic distance between
$a$ and $b$ is $\vert a-b\vert_p$.
Absolute values (and also metrics induces by these absolute values) that satisfy  strong triangle inequality are called \emph{non-Archimedean}.
Although the strong triangle inequality
is the only difference of the $p$-adic metric from   real
or complex metrics it results in
dramatic differences in behaviour of $p$-adic dynamical systems compared
to that of  real or complex counterparts.

The space $\mathbb Z_p$ is equipped with a natural probability measure, namely, the Haar measure $\mu_p$ normalized
so that $\mu_p(\mathbb Z_p) = 1$: Balls $\mathbf B_{p^{-r}}(a)$ of non-zero radii constitute
the base of the corresponding $\sigma$-algebra of measurable subsets, 
$\mu_p(\mathbf B_{p^{-r}}(a))=p^{-r}$. The measure $\mu_p$ is a regular Borel measure,
so all continuous transformations $f\colon\mathbb Z_p\rightarrow\mathbb Z_p$ are measurable with respect to
$\mu_p$.
As usual, a measurable mapping $f\colon\mathbb Z_p\rightarrow \mathbb Z_p$
is called \emph{measure-preserving} if 
$\mu_p(f^{-1}(\mathbf S))=\mu(\mathbf S)$ for each measurable subset $\mathbf
S\subset \mathbb Z_p$.
A measure-preserving mapping $f\colon\mathbb Z_p\rightarrow\mathbb Z_p$ is called \emph{ergodic} if $f^{-1}(\mathbf S)=\mathbf
S$ implies either
$\mu_p(\mathbf S)=0$ or $\mu_p(\mathbf S)=1$ (in the paper, speaking of ergodic
mapping we mean that the mappings are also measure-preserving).

Let a transformation $f\colon\mathbb Z_p\rightarrow\mathbb Z_p$ be non-expanding
with respect to the $p$-adic metric; that is, let $f$ be  a 1-Lipschitz with respect to the $p$-adic metric:
 $$\left| f(x)- f(y)\right|_p \leq \left|x-y \right|_p$$
for all $x,y \in \mathbb Z_p$. 
The 1-Lipschitz property may be re-stated in terms
of congruences rather than in term of inequalities, in the following way.

Given $a,b\in\mathbb Z_p$ and $k\in\mathbb N=\{1,2,3,\ldots\}$, the congruence $a\equiv b\pmod{p^k}$ is well defined: the congruence just means that images of $a$ of $b$ under
action of the ring epimorphism $\bmod p^k\colon\mathbb Z_p\rightarrow\mathbb Z/p^k\mathbb Z$ of the ring $\mathbb Z_p$
onto the residue ring $\mathbb Z/p^k\mathbb Z$ modulo $p^k$ coincide. Remind that by the
definition the  epimorphism $\bmod
p^k$ sends a $p$-adic integer that has a \emph{canonic representation} $\sum_{i=0}^\infty\alpha_ip^i$,
$\alpha_i\in\{0,1,\ldots,p-1\}$, $i=0,1,2,\ldots$, to  $\sum_{i=0}^{k-1}\alpha_ip^i\in\mathbb Z/p^k\mathbb Z$.
Note also that we treat if necessary elements from $\mathbb Z/p^k\mathbb Z$ as numbers from $\{0,1,\ldots,p^k-1\}$. 

Now it is obvious that the congruence $a\equiv b\pmod{p^k}$ is equivalent
to the inequality $\vert a-b\vert_p\le p^{-k}$. Therefore the transformation $f\colon\mathbb Z_p\rightarrow\mathbb Z_p$
is 1-Lipschitz if and only if it is \emph{compatible}; that is,
\begin{equation}
\label{1L=comp}
f(a)\equiv f(b)\pmod {p^k} \ \text{once}\ a\equiv b \pmod{p^k}.
\end{equation}
The compatibility property implies that given a 1-Lipschitz transformation $f\colon\mathbb Z_p\rightarrow\mathbb Z_p$,
the \emph{reduced mapping modulo $p^k$} $$f\md p^k\colon z\md p^k\mapsto
f(z)\md p^k$$ is a well defined mapping $f\md p^k\colon\mathbb Z/p^k\mathbb Z\rightarrow\mathbb Z/p^k\mathbb Z_p$ of residue
ring $\mathbb Z/p^k\mathbb Z_p$ into itself: The mapping $f\md p^k$ does not depend on
the choice of representative $z$ in the ball $z+p^k\mathbb Z_p$ (the latter ball is a coset with respect to
the epimorphism $\bmod p^k$); that is, the following diagram commutes: 
$$
\begin{CD}
\mathbb Z_p@>f>>\mathbb Z_p\\
@VV\md p^k V @VV\md p^k V\\
\mathbb Z/p^k\mathbb Z @>f\md p^k>>\mathbb Z/p^k\mathbb Z
\end{CD}
$$
A 1-Lipschitz transformation $f\colon\mathbb Z_p\rightarrow\mathbb Z_p$ is called \emph{bijective modulo $p^k$} if the reduced mapping $f\md p^k$ is a permutation on $\mathbb Z/p^k\mathbb Z$; and $f$
is called \emph{transitive modulo $p^k$} 
if $f\md p^k$ is a permutation that 
is cycle of length $p^k$.  
Main ergodic theorem for 1-Lipschitz transformations on $\mathbb Z_p$  \cite[Theorem 4.23]{ANKH}
yields: 
\begin{theo}[Main ergodic theorem]
\label{thm:erg-main}
A 1-Lipschitz transformation $f\colon\mathbb Z_p\rightarrow\mathbb Z_p$ is measure-preserving if and only if it is bijective modulo $p^k$ for all $k=1,2,3,\ldots$; and
$f$ is ergodic if and only if $f$ is transitive modulo $p^k$ for all $k=1,2,3,\ldots$.
\end{theo}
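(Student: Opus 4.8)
The plan is to translate everything into the inverse‑limit picture: $\mathbb Z_p$ is the inverse limit of the finite rings $\mathbb Z/p^k\mathbb Z$, the balls $\mathbf B_{p^{-k}}(a)=a+p^k\mathbb Z_p$ are exactly the fibres of the projection $\md p^k$, and (by compatibility \eqref{1L=comp}) $f$ sits over the finite maps $f\md p^k$. The one computational fact I would record at the outset is the shape of preimages of balls: since $f$ is $1$-Lipschitz, $f^{-1}(z+p^k\mathbb Z_p)$ is a disjoint union of balls of radius $p^{-k}$, one for each residue $w\md p^k$ with $f(w)\equiv z\pmod{p^k}$. Hence $\mu_p\bigl(f^{-1}(z+p^k\mathbb Z_p)\bigr)=p^{-k}\cdot\#\,(f\md p^k)^{-1}(z)$. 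Everything else is extracted from this together with the structure of permutations.

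\emph{Measure preservation.} The set function $C\mapsto\mu_p(f^{-1}(C))$ is a Borel probability measure on $\mathbb Z_p$, and the balls of all radii form a $\pi$‑system (in fact a semiring) generating the Borel $\sigma$‑algebra; so $f$ is measure‑preserving iff $\mu_p(f^{-1}(B))=\mu_p(B)$ for every ball $B$. By the preimage formula, this holds for all $B$ of radius $p^{-k}$ exactly when every residue modulo $p^k$ has precisely one $(f\md p^k)$‑preimage, i.e.\ when $f\md p^k$ is a permutation; imposing this for all $k$ gives the measure‑preservation half of the theorem.

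\emph{Ergodicity.} Assume first $f$ is ergodic; then it is measure‑preserving, so $f\md p^k$ is a permutation $\sigma_k$ of the finite set $\mathcal B_k$ of balls of radius $p^{-k}$, with $f(B)\subseteq\sigma_k(B)$ and $f^{-1}(\sigma_k(B))=B$. If $\sigma_k$ were not a single $p^k$‑cycle, the union $S$ of the balls in one of its shorter cycles would satisfy $f^{-1}(S)=S$ with $0<\mu_p(S)<1$, contradicting ergodicity; hence $f$ is transitive modulo $p^k$. Conversely, suppose $f\md p^k$ is transitive for every $k$, and let $A$ be measurable with $f^{-1}(A)=A$. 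Fix $k$ and $B\in\mathcal B_k$; since $f^{-1}(\sigma_k(B))=B$ and $f$ preserves $\mu_p$, the restriction $f|_B\colon B\to\sigma_k(B)$ is itself measure‑preserving (both balls have measure $p^{-k}$), so $\mu_p(A\cap B)=\mu_p\bigl((f|_B)^{-1}(A\cap\sigma_k(B))\bigr)=\mu_p(A\cap\sigma_k(B))$. As $\sigma_k$ is a single cycle on $\mathcal B_k$, the numbers $\mu_p(A\cap B)$ are all equal, and summing over the $p^k$ balls forces $\mu_p(A\cap B)=p^{-k}\mu_p(A)=\mu_p(A)\,\mu_p(B)$. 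Thus $A$ is independent of every ball; since balls generate the $\sigma$‑algebra, a routine approximation argument upgrades this to $\mu_p(A\cap C)=\mu_p(A)\,\mu_p(C)$ for all measurable $C$, and $C=A$ gives $\mu_p(A)=\mu_p(A)^2\in\{0,1\}$.

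The one step carrying real content is the final converse: reducing an invariant set to something independent of every ball. The crux is the observation that a measure‑preserving $1$-Lipschitz $f$ restricts to a measure‑preserving bijection between paired balls $B\to\sigma_k(B)$, so an invariant set has the same mass in every ball of a fixed level; after that the cyclic structure of $\sigma_k$ finishes the job. This is exactly the ``inverse limit of ergodic finite systems is ergodic'' principle, and spelling it out combinatorially as above lets one avoid invoking the martingale convergence theorem.
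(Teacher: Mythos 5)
The paper does not prove this statement at all: Theorem \ref{thm:erg-main} is imported verbatim from \cite[Theorem 4.23]{ANKH}, so there is no in-paper argument to compare yours against. Your proof is correct and self-contained. The two load-bearing steps both check out: (i) because $f$ is $1$-Lipschitz, each ball $w+p^k\mathbb Z_p$ is mapped entirely into $f(w)+p^k\mathbb Z_p$, so $f^{-1}(z+p^k\mathbb Z_p)$ really is the disjoint union of the balls $w+p^k\mathbb Z_p$ over the residues $w$ with $f(w)\equiv z\pmod{p^k}$, which gives the counting formula and, via the $\pi$-system uniqueness of measures, the measure-preservation criterion (note $\sum_z N_z=p^k$, so surjectivity of $f\bmod p^k$ already forces bijectivity); (ii) in the converse ergodicity direction, the identity $(f|_B)^{-1}(A\cap\sigma_k(B))=f^{-1}(A)\cap f^{-1}(\sigma_k(B))=A\cap B$ is exactly what makes the mass of an invariant set constant across the $p^k$ balls of a fixed level, and the single-cycle hypothesis then yields $\mu_p(A\cap B)=\mu_p(A)\mu_p(B)$; the final upgrade to all Borel sets is the standard Dynkin ($\pi$--$\lambda$) argument you correctly flag as routine. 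One small bookkeeping point: in the converse you should state explicitly that transitivity modulo $p^k$ for all $k$ implies bijectivity modulo $p^k$ for all $k$, hence measure preservation by the first half --- you use this when you invoke ``$f$ preserves $\mu_p$'' for the restriction $f|_B$, and it is also needed because the paper's convention builds measure preservation into the definition of ergodicity. With that made explicit, the argument is complete and is essentially the classical inverse-limit proof one finds in the cited monograph.
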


Now we remind definition and basic properties of  van der Put series  following
\cite{Mahler}. Given a continuous 
$p$-adic function 
$f\colon \mathbb Z_p\rightarrow \mathbb Z_p$ defined on $\mathbb Z_p$ and
valuated in $\mathbb Z_p$, 
there exists 
a unique sequence 
$B_0,B_1,B_2, \ldots $ 
of $p$-adic integers such that 

\begin{equation}
\label{vdp}
f(x)=\sum_{m=0}^{\infty}
B_m \chi(m,x) 
\end{equation}
for all $x \in \mathbb Z_p$, where 
\begin{displaymath}
\chi(m,x)=\left\{ \begin{array}{cl}
1, &\text{if}
\ \left|x-m \right|_p \leq p^{-n} \\
0, & \text{otherwise}
\end{array} \right.
\end{displaymath}
and $n=1$ if $m=0$; $n$ is uniquely defined by the inequality 
$p^{n-1}\leq m \leq p^n-1$ otherwise. The right side series in \eqref{vdp} is called the \emph{van der Put series} of the function $f$. Note that
the sequence $B_0, B_1,\ldots,B_m,\ldots$ of \emph{van der Put coefficients} of
the function $f$ tends $p$-adically to $0$ as $m\to\infty$, and the series
converges uniformly on $\mathbb Z_p$. Vice versa, if a sequence $B_0, B_1,\ldots,B_m,\ldots$
of $p$-adic integers tends $p$-adically to $0$ as $m\to\infty$, then the the series in the right-hand
side of \eqref{vdp} converges uniformly on $\mathbb Z_p$ and thus defines a continuous
function $f\colon \mathbb Z_p\to\mathbb Z_p$.

The number $n$ in the definition of $\chi(m,x)$ has a very natural meaning:
As
\begin{equation}
\label{eq:floor_log}
\left\lfloor  \log_p m \right\rfloor 
=
\left(\text{\emph{the number of digits in a base-}} p \;\text{\emph{expansion for}} \;m\right)-1,
\end{equation}
therefore $n=\left\lfloor  \log_p m \right\rfloor+1$ for all $m\in\mathbb N_0$;
we put  $\left\lfloor  \log_p 0 \right\rfloor=0$ by this reason. 
Recall that $\lfloor\alpha \rfloor$ for a real $\alpha $ denotes the integral part of $\alpha$, that is, the nearest
to $\alpha $ rational integer which does not exceed  $\alpha $.

Coefficients $B_m$ are
related to values of the function $f$ in the following way:
Let 
$m=m_0+ \ldots +m_{n-2} p^{n-2}+m_{n-1} p^{n-1}$ be a base-$p$ expansion
for $m$, i.e., 
 $ m_j\in \left\{0,\ldots ,p-1\right\}$, $j=0,1,\ldots,n-1$ and $m_{n-1}\neq 0$, then
\begin{equation}
\label{eq:vdp-coef}
B_m=
\begin{cases}
f(m)-f(m-m_{n-1} p^{n-1}),\ &\text{if}\ m\geq p;\\
f(m),\ &\text{if otherwise}.
 \end{cases}
\end{equation}
It is worth noticing  also that $\chi (m,x)$ is merely  a \emph{characteristic function of the ball} $\mathbf B_{p^{-\left\lfloor  \log_p m \right\rfloor-1}}(m)=m+p^{\left\lfloor  \log_p m \right\rfloor-1}\mathbb Z_p$
of radius $p^{-\left\lfloor  \log_p m \right\rfloor-1}$ centered at $m\in\mathbb N_0=\{0,1,2,\ldots\}$:
\begin{multline}
\label{eq:chi}
\chi(m,x)=\begin{cases}
1,\ &\text{if}\ x\equiv m \pmod{p^{\left\lfloor  \log_p m \right\rfloor+1}};\\
0,\ &\text{if otherwise}
 \end{cases}
 =\\
\begin{cases}
1,\ &\text{if}\ x\in \mathbf B_{p^{-\left\lfloor  \log_p m \right\rfloor-1}}(m);\\
0,\ &\text{if otherwise}
\end{cases}
\end{multline}
The following theorem that characterizes 1-Lipschitz functions in terms of van
der Put basis was proved in \cite{DAN}:
\begin{theo}
\label{thm:comp}
Let a function $f\colon \mathbb Z_p\rightarrow \mathbb Z_p$ be represented via van der Put series
\eqref{vdp};  then $f$ is compatible \textup{(that is, satisfies the $p$-adic
Lipschitz
condition with a constant 1)} if and only if $|B_m|_p\le
p^{-\left\lfloor \log_p m \right\rfloor}$ for all $m=0,1,2,\ldots$. 
\end{theo}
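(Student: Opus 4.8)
The plan is to derive both implications directly from formula \eqref{eq:vdp-coef} for the van der Put coefficients, the description \eqref{eq:chi} of $\chi(m,x)$ as a characteristic function of a ball, and the congruence reformulation \eqref{1L=comp} of the $1$-Lipschitz property; no deeper machinery is needed.

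For \emph{necessity}, I would assume $f$ is compatible and split according to the size of $m$. When $m<p$ (in particular $m=0$) one has $\lfloor\log_p m\rfloor=0$ and $B_m=f(m)\in\mathbb Z_p$ (respectively $B_0=f(0)$), so $|B_m|_p\le 1=p^{-\lfloor\log_p m\rfloor}$ holds trivially. When $m\ge p$, set $n=\lfloor\log_p m\rfloor+1\ge 2$; then $m$ and $m-m_{n-1}p^{n-1}$ differ by a multiple of $p^{n-1}$, hence agree modulo $p^{n-1}$, so \eqref{1L=comp} together with \eqref{eq:vdp-coef} gives
$|B_m|_p=|f(m)-f(m-m_{n-1}p^{n-1})|_p\le |m_{n-1}p^{n-1}|_p=p^{-(n-1)}=p^{-\lfloor\log_p m\rfloor}$,
where I use that the leading digit $m_{n-1}\in\{1,\dots,p-1\}$ is a unit in $\mathbb Z_p$.

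For \emph{sufficiency}, I would assume $|B_m|_p\le p^{-\lfloor\log_p m\rfloor}$ for all $m$, fix $k\ge 1$, and take $x\equiv y\pmod{p^k}$. Since $B_m\to 0$ the series \eqref{vdp} converges uniformly, so $f(x)-f(y)=\sum_{m=0}^{\infty}B_m\bigl(\chi(m,x)-\chi(m,y)\bigr)$ and, by the strong triangle inequality, it suffices to bound each summand by $p^{-k}$. A summand is nonzero only when $\chi(m,x)\ne\chi(m,y)$; by \eqref{eq:chi} this means the ball $\mathbf B_{p^{-\lfloor\log_p m\rfloor-1}}(m)$ contains exactly one of $x,y$, which is impossible as soon as its radius is at least $p^{-k}$, i.e. as soon as $\lfloor\log_p m\rfloor+1\le k$. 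Hence $\chi(m,x)\ne\chi(m,y)$ forces $\lfloor\log_p m\rfloor\ge k$, and then $|B_m(\chi(m,x)-\chi(m,y))|_p\le |B_m|_p\le p^{-\lfloor\log_p m\rfloor}\le p^{-k}$. Therefore $|f(x)-f(y)|_p\le p^{-k}$, and since $k$ is arbitrary this is exactly compatibility.

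The computation is essentially a substitution, so the only points that deserve attention — and the main (mild) obstacle — are the index bookkeeping in the sufficiency part, namely correctly comparing the radius $p^{-\lfloor\log_p m\rfloor-1}$ underlying $\chi(m,\cdot)$ with $p^{-k}$ so that the relevant indices $m$ are precisely those with $\lfloor\log_p m\rfloor\ge k$, together with the (routine) justification that the ultrametric estimate passes through the infinite sum because $B_m\to 0$.
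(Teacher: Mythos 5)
The paper does not prove Theorem \ref{thm:comp} itself --- it is quoted as a result established in \cite{DAN} --- so there is no in-paper argument to compare against; your proof is correct and is the standard one for this characterization. Both directions check out: necessity uses that $m$ and $m-m_{n-1}p^{n-1}$ are at exact distance $p^{-(n-1)}$ (the leading digit being a unit) together with \eqref{eq:vdp-coef}, and sufficiency correctly identifies that $\chi(m,x)\ne\chi(m,y)$ with $x\equiv y\pmod{p^k}$ forces $\lfloor\log_p m\rfloor\ge k$, after which the ultrametric estimate $|\sum_m a_m|_p\le\sup_m|a_m|_p$ for the uniformly convergent series finishes the argument.
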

In other words, $f$ is compatible if and only if it can be represented as
\begin{equation}
\label{eq:vdp-comp}
f(x)=\sum_{m=0}^{\infty}
p^{\left\lfloor \log_p m \right\rfloor} b_m \chi(m,x),
\end{equation}
for suitable $b_m\in \mathbb Z_p$; $m=0,1,2,\ldots$.

To study ergodicity on $p$-adic spheres, the following lemma is useful (further
$f^i$ stands for the $i$-th iterate of $f$):
\begin{lemma}[\protect{\cite[Lemma 4.76]{ANKH}}]
\label{lem:transit}
A 1-Lipschitz transformation $f\colon \mathbb Z_p\rightarrow
\mathbb Z_p$ is ergodic on the sphere $\mathbf S_{p^{-r}}(y) $ if and only if the following
two conditions hold simultaneously:
\begin{enumerate}
\item The mapping $z\mapsto f(z)\md p^{r+1}$ is transitive on the set
$$\mathbf S_{p^{-r}}(y)\md p^{r+1}=\{y+p^rs\colon s=1,2,\ldots,p-1\}\subset\mathbb Z/p^{r+1}\mathbb Z;$$
\item The mapping $z\mapsto f^{p-1}(z)\md p^{r+t+1}$ is transitive on  the set
$$
\mathbf B_{p^{{-(r+1)}}}(y+p^rs)\md p^{r+t+1}=\{y+p^rs+p^{r+1}S\colon S=0,1,2,\ldots,
p^{t}-1\},
$$
for all $t=1,2,\ldots$ and for some \textup{(}equivalently, for all\textup{)} $s\in\{1,2,\ldots, p-1\}$.
\end{enumerate}
Condition 2 holds if and only if $f^{p-1}$ is an ergodic transformation on the ball $\mathbf B_{p^{{-(r+1)}}}(y+p^rs)=y+p^rs+p^{r+1}\mathbb Z_p$ of radius $p^{-r-1}$
centered at $y+p^rs$,
for some \textup{(}equivalently, for all\textup{)} $s\in\{1,2,\ldots, p-1\}$.
\end{lemma}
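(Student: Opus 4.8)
The plan is to mimic the proof of the Main Ergodic Theorem (Theorem~\ref{thm:erg-main}): first reduce ergodicity of $f$ on the sphere to transitivity of $f\md p^k$ on the finite set $\mathbf S_{p^{-r}}(y)\md p^k$ for every $k\ge r+1$, and then analyse those finite permutations through the block decomposition of the sphere. Recall that $\mathbf S_{p^{-r}}(y)$ is the disjoint union of the $p-1$ balls $\mathbf B_{p^{-(r+1)}}(y+p^rs)$, $s=1,\dots,p-1$, and that, with $\mu_p$ restricted to the sphere and renormalised to total mass~$1$, the balls $a+p^k\mathbb Z_p\subseteq\mathbf S_{p^{-r}}(y)$ with $k\ge r+1$ generate the Borel $\sigma$-algebra. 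Hence the measure-theoretic argument behind Theorem~\ref{thm:erg-main} carries over and shows that a $1$-Lipschitz $f$ taking the sphere into itself is ergodic on the sphere if and only if $f\md p^k$ is a single cycle on the $(p-1)p^{k-r-1}$-element set $\mathbf S_{p^{-r}}(y)\md p^k$ for every $k\ge r+1$. Taking $k=r+1$ forces $f\md p^{r+1}$ to be a $(p-1)$-cycle on $\{y+p^rs:\,s=1,\dots,p-1\}$, i.e.\ condition~1; and once condition~1 holds, compatibility of $f$ and the commuting diagram above force $f\md p^k$ to send each block $X_s:=\mathbf B_{p^{-(r+1)}}(y+p^rs)\md p^k$ into the block $X_{\sigma(s)}$, where $\sigma$ denotes the $(p-1)$-cycle induced by $f\md p^{r+1}$.

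The crux is an elementary fact about permutations respecting a partition. If $g$ permutes a finite set $X=X_1\sqcup\dots\sqcup X_{p-1}$ with all blocks of the same cardinality and $g(X_i)=X_{i+1}$ (indices modulo $p-1$), then the $g$-orbit of any $x\in X_1$ meets $X_1$ exactly at times divisible by $p-1$, so its trace on $X_1$ is precisely the $g^{p-1}$-orbit of $x$; thus $|\mathrm{orb}_g(x)|=(p-1)\,|\mathrm{orb}_{g^{p-1}}(x)|$, and $g$ is a single cycle on $X$ if and only if $g^{p-1}$ is a single cycle on $X_1$. Applying this for each $t\ge1$ with $g=f\md p^{r+t+1}$, $X_s=\mathbf B_{p^{-(r+1)}}(y+p^rs)\md p^{r+t+1}$ (of cardinality $p^t$), and $g^{p-1}=f^{p-1}\md p^{r+t+1}$ — and noting that the inclusion $g(X_s)\subseteq X_{\sigma(s)}$ is actually an equality, since $g$ permutes the sphere modulo $p^{r+t+1}$ and the blocks have equal size — turns transitivity of $f\md p^{r+t+1}$ on $\mathbf S_{p^{-r}}(y)\md p^{r+t+1}$ into transitivity of $f^{p-1}\md p^{r+t+1}$ on $\mathbf B_{p^{-(r+1)}}(y+p^rs)\md p^{r+t+1}$, which is exactly condition~2. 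Letting $t$ range over $\mathbb N$ and applying Theorem~\ref{thm:erg-main} to the ball $\mathbf B_{p^{-(r+1)}}(y+p^rs)$ — into which $f^{p-1}$ indeed maps, since condition~1 makes $\sigma^{p-1}$ trivial — yields the final restatement of condition~2 as ergodicity of $f^{p-1}$ on that ball.

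The only point still needing care, in the direction from the two conditions to ergodicity, is the bijectivity of $g=f\md p^{r+t+1}$ on the blocks that was used above: if condition~2 holds for a single index $s_0$, then $f^{p-1}\md p^{r+t+1}$ is a bijection of $X_{s_0}$, and since $f^{p-1}|_{X_{s_0}}$ factors as a composition through $X_{\sigma(s_0)},X_{\sigma^2(s_0)},\dots$, each map $g\colon X_s\to X_{\sigma(s)}$ is injective, hence bijective, for every $s$; together with condition~1 this presents $g$ as a block-preserving permutation of $\mathbf S_{p^{-r}}(y)\md p^{r+t+1}$, so the combinatorial fact applies and transitivity modulo every $p^k$ with $k\ge r+1$ follows. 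The equivalence of \emph{for some} and \emph{for all} $s$ is then automatic: once $f$ is ergodic on the sphere it restricts to a measure-preserving bijection $\mathbf B_{p^{-(r+1)}}(y+p^rs)\to\mathbf B_{p^{-(r+1)}}(y+p^r\sigma(s))$ intertwining $f^{p-1}$ on the two balls (since $f\circ f^{p-1}=f^{p-1}\circ f$), so ergodicity of $f^{p-1}$ is carried cyclically around all $p-1$ balls. I expect the main obstacle to be the very first step — establishing the sphere analogue of the Main Ergodic Theorem — because the sphere is not a single ball when $p>2$, so one must verify that the argument underlying Theorem~\ref{thm:erg-main} still goes through for a finite disjoint union of balls; everything after that is bookkeeping with the $p-1$ blocks together with the one-line cycle-length computation. (Equivalently, one may phrase the whole proof in ergodic-theoretic terms: under condition~1 the first-return map of $f$ to any of the balls $\mathbf B_{p^{-(r+1)}}(y+p^rs)$ equals $f^{p-1}$, and $f$ is ergodic on the sphere exactly when this induced transformation is ergodic on the ball.)
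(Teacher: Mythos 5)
Your proof is correct, but note that the paper itself contains no proof of this statement: the lemma is imported verbatim from \cite[Lemma 4.76]{ANKH} and used as a black box, so there is nothing in this paper to compare your argument against. On its own merits your argument is sound and is essentially the standard one (and the one used in the cited monograph): first extend Theorem \ref{thm:erg-main} from $\mathbb Z_p$ to the clopen set $\mathbf S_{p^{-r}}(y)$, reducing ergodicity to transitivity of $f\md p^k$ on $\mathbf S_{p^{-r}}(y)\md p^k$ for all $k\ge r+1$; then use the block decomposition of the sphere into the $p-1$ balls and the elementary cycle-length identity $|\mathrm{orb}_g(x)|=(p-1)|\mathrm{orb}_{g^{p-1}}(x)|$ to trade transitivity of $f$ on the sphere for transitivity of $f^{p-1}$ on a single ball. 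You correctly flag and close the one genuine subtlety in the ``conditions imply ergodicity'' direction, namely that bijectivity of $f\md p^{r+t+1}$ on each block must be extracted from condition 2 via the factorization of $f^{p-1}|_{X_{s_0}}$ through the successive blocks, and your handling of the ``for some / for all $s$'' equivalence by conjugating $f^{p-1}$ around the cycle of balls is the right mechanism. The only cosmetic caveat is that Theorem \ref{thm:erg-main} is stated for $\mathbb Z_p$ rather than for a ball, so the final restatement of condition 2 as ergodicity of $f^{p-1}$ on $\mathbf B_{p^{-(r+1)}}(y+p^rs)$ should formally pass through the rescaling $u(z)=p^{-(r+1)}\bigl(f^{p-1}(b+p^{r+1}z)-b\bigr)$ described in the paper's introduction (and carried out for $p=2$ in Proposition \ref{prop:ball-space}); you invoke this implicitly and it is routine.
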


The ergodic 1-Lipschitz transformations of $\mathbb Z_2$ are completely characterized
by the following theorem, see \cite{DAN}:
\begin{theo} 
\label{thm:ergnew}
A 1-Lipschitz transformation $f\colon \mathbb Z_2\rightarrow \mathbb Z_2$ is ergodic if and only if
it can be represented as
\[
f(x)= b_0\chi(0,x)+ b_1\chi(1,x)+  
\sum_{m=2}^{\infty}
2^{\left\lfloor \log_2 m \right\rfloor} b_m \chi(m,x)
\]
for suitable $b_m\in \mathbb Z_2$ that satisfy the following conditions:

\begin{enumerate}
\item 
$b_0 \equiv 1 \pmod2$; 
\item $b_0+b_1 \equiv 3 \pmod4$;
\item $|b_m|_2=1$, $m\geq 2$;
\item $b_2+b_3\equiv2\pmod4$;
\item $\sum_{m=2^{n-1}}^{2^{n}-1} b_m \equiv 0\pmod4$, $n\geq 3$.
\end{enumerate}
\end{theo}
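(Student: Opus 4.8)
The plan is to reduce everything to the Main Ergodic Theorem (Theorem~\ref{thm:erg-main}): $f$ is ergodic on $\mathbb{Z}_2$ exactly when it is transitive modulo $2^k$ for all $k\ge 1$. Since transitivity modulo $2^k$ forces transitivity modulo $2^{k-1}$, it suffices to treat the levels one at a time and to understand when a single $2^n$-cycle $f\bmod 2^n$ lifts to a single $2^{n+1}$-cycle $f\bmod 2^{n+1}$. By the standard lifting criterion for transitive compatible maps on $\mathbb{Z}_2$ (cf.\ \cite{ANKH}), together with the fact that an ergodic, and more generally a measure-preserving, map is bijective modulo every $2^k$, this lift occurs if and only if $f^{2^n}(0)\equiv 2^n\pmod{2^{n+1}}$ (note that $f^{2^n}(0)\equiv 0\pmod{2^n}$ already, by transitivity modulo $2^n$).

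The technical core is an explicit formula for $f^{2^n}(0)\bmod 2^{n+1}$ in terms of the coefficients $b_m$, valid whenever $f$ is transitive modulo $2^n$. I would write $f^{2^n}(0)=\sum_{i=0}^{2^n-1}\bigl(f(x_i)-x_i\bigr)$ with $x_i=f^i(0)$, insert the van der Put expansion, and interchange the two summations. Because $f$ is transitive modulo $2^n$, the iterates $x_0,\dots,x_{2^n-1}$ run through all residues modulo $2^n$ exactly once; hence $\sum_i\chi(m,x_i)=2^{\,n-\lfloor\log_2 m\rfloor-1}$ for each $m<2^n$, which makes $\sum_{m<2^n}B_m\sum_i\chi(m,x_i)=2^{\,n-1}\sum_{m=0}^{2^n-1}b_m$. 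For $2^n\le m\le 2^{n+1}-1$ the coefficient is $2^nb_m\equiv 2^n\pmod{2^{n+1}}$ (here condition~(3), $|b_m|_2=1$, is used), and the contribution of these terms is $2^nN$, where $N$ is the number of $x_i$ lying in the upper residue disc modulo $2^{n+1}$; terms with $m\ge 2^{n+1}$ vanish modulo $2^{n+1}$. The same count $N$ reappears in $\sum_i x_i\equiv \sum_{r=0}^{2^n-1}r+2^nN\pmod{2^{n+1}}$, so the $N$-terms cancel and one is left with
\[
f^{2^n}(0)\equiv 2^{\,n-1}\Bigl(1+\sum_{m=0}^{2^n-1}b_m\Bigr)\pmod{2^{n+1}}\qquad(n\ge 2),
\]
together with the exceptional value $f^{2}(0)\equiv \bigl(\sum_{m=0}^{1}b_m\bigr)-1\pmod 4$ for $n=1$ (the discrepancy comes from $\sum_{r=0}^{2^n-1}r\equiv -2^{\,n-1}\pmod{2^{n+1}}$ holding for $n\ge 2$ but failing for $n=1$).

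Sufficiency then follows by induction on $n$. Transitivity modulo $2$ is equivalent to $b_0$ odd and $b_1$ even, which is guaranteed by (1) and (2); measure-preservation (hence bijectivity modulo every $2^k$) follows from (1)--(3) by the van der Put criterion for measure preservation of \cite{DAN,AKY}. For the lift $2\to 4$ the criterion reads $b_0+b_1\equiv 3\pmod 4$, i.e.\ condition~(2). For $n\ge 2$ the lifting condition is $\sum_{m=0}^{2^n-1}b_m\equiv 1\pmod 4$: taking $n=2$ and subtracting the already-secured congruence $b_0+b_1\equiv 3\pmod 4$ yields $b_2+b_3\equiv 2\pmod 4$, i.e.\ condition~(4); and for $n\ge 3$, subtracting the inductive hypothesis $\sum_{m=0}^{2^{n-1}-1}b_m\equiv 1\pmod 4$ yields $\sum_{m=2^{n-1}}^{2^n-1}b_m\equiv 0\pmod 4$, i.e.\ condition~(5). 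Necessity is obtained by running the same chain of equivalences in reverse, with conditions (1) and (3) extracted first from the fact that an ergodic map is measure-preserving.

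The step I expect to be the main obstacle is the bookkeeping in the formula for $f^{2^n}(0)$: one must track, modulo $2^{n+1}$, exactly how the orbit splits between the two residue discs above each class modulo $2^n$, verify that this a priori unknown count $N$ cancels between $\sum_i f(x_i)$ and $\sum_i x_i$, see precisely where condition~(3) collapses the middle-range terms to $2^n$, and isolate the genuinely different behaviour at $n=1$. Once the formula is in place, the reduction via Theorem~\ref{thm:erg-main}, the cycle-lifting criterion, and the concluding induction are all routine.
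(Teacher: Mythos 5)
The paper does not actually prove Theorem~\ref{thm:ergnew}: it is imported verbatim from \cite{DAN} (see also \cite{AKY}), so there is no in-paper argument to compare against. Your proposal is correct and follows essentially the standard route of that reference: reduce to transitivity modulo $2^k$ via Theorem~\ref{thm:erg-main}, use the cycle-lifting criterion $f^{2^n}(0)\equiv 2^n\pmod{2^{n+1}}$ (legitimate once bijectivity modulo $2^{n+1}$ is secured from conditions (1)--(3)), and evaluate $f^{2^n}(0)=\sum_{i}\bigl(f(x_i)-x_i\bigr)$ by swapping summations in the van der Put expansion. I checked the key computation: the count $N$ of orbit points in the upper residue disc does cancel between $\sum_i f(x_i)$ and $\sum_i x_i$, yielding $f^{2^n}(0)\equiv 2^{n-1}\bigl(1+\sum_{m=0}^{2^n-1}b_m\bigr)\pmod{2^{n+1}}$ for $n\ge 2$ and $f^{2}(0)\equiv b_0+b_1-1\pmod 4$ for $n=1$, from which conditions (2), (4), (5) fall out exactly as you describe.
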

\begin{remark}
\label{rem:eq-er-sph}
It is an elementary exercise to show that condition 2 in the statement of Theorem \ref{thm:ergnew}
  can be replaced by the condition $b_1-b_0\equiv 1\pmod 4$.
\end{remark}

\section{Ergodicity of 1-Lipschitz dynamical systems on 2-adic spheres}
In this section we prove ergodicity criterion for 1-Lipschitz dynamics on
2-adic spheres, so further  $p=2$ and  $f\colon\mathbb Z_2\rightarrow\mathbb Z_2$  is a  1-Lipschitz function. Let $\mathbf S_{2^{-r}}(a)$ be a sphere of radius $2^{-r}$ with a center at the point $a\in \left\{0,\ldots,2^r-1\right\}$, and
let the sphere $\mathbf S_{2^{-r}}(a)$ be invariant under action of  $f$; that is,
let $f(\mathbf S_{2^{-r}}(a))\subset \mathbf S_{2^{-r}}(a)$.
As $p=2$, the sphere $\mathbf S_{2^{-r}}(a)$ coincides with the ball $\mathbf B_{2^{-r-1}}(a+2^r)$ of radius $2^{-r-1}$ centered at the point $a+2^r$: $\mathbf S_{2^{-r}}(a)=\left\{a+2^r+2^{r+1}x\colon x\in \mathbb Z_2\right\}=\mathbf B_{2^{-r-1}}(a+2^r)$.
Therefore the sphere $\mathbf S_{2^{-r}}(a)$ is $f$-invariant if and only
if $f(a+2^r+2^{r+1}\mathbb Z_p)\subset a+2^r+2^{r+1}\mathbb Z_p$; that is, if and only if
\begin{equation}
\label{eq:f-invar}
f(a+2^r)\equiv a+2^r\pmod{2^{r+1}}
\end{equation}
as a 1-Lipschitz function maps a ball of radius $2^{-\ell}$ into a ball of
radius $2^{-\ell}$.

Further, as $f$ is 1-Lipschitz, we can represent $f\colon\mathbb Z_2\rightarrow\mathbb Z_2$ as
\begin{equation}
\label{eq:f-rep}
f(a+2^r+2^{r+1}x)=f(a+2^r)
+2^{r+1}g(x);
\end{equation}
then $g\colon\mathbb Z_2\rightarrow\mathbb Z_2$ is a 1-Lipschitz function. The following proposition
holds:
\begin{prop}
\label{prop:ball-space}
The function $f$ is ergodic on the sphere $\mathbf S_{2^{-r}}(a)$ if and only if $f(a+2^r)\equiv a+2^r\pmod{2^{r+1}}$ and the function $g$ defined as 
\begin{equation}
\label{eq:g-rep}
g(x)=\frac{f(a+2^r+2^{r+1}x)-
(a+2^r)
}{2^{r+1}}
\end{equation}
is a 1-Lipschitz ergodic transformation on $\mathbb Z_2$.
\end{prop}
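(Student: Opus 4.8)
The plan is to realize the system $\langle f;\mathbf S_{2^{-r}}(a)\rangle$ as a metric conjugate of $\langle g;\mathbb Z_2\rangle$ via the affine homeomorphism identifying the sphere with $\mathbb Z_2$, and then to use the fact that both measure-preservation and ergodicity are invariants of such a conjugacy. Concretely, set $\phi\colon\mathbb Z_2\to\mathbf S_{2^{-r}}(a)$, $\phi(x)=a+2^r+2^{r+1}x$; since $\mathbf S_{2^{-r}}(a)=\mathbf B_{2^{-r-1}}(a+2^r)=(a+2^r)+2^{r+1}\mathbb Z_2$, this $\phi$ is a homeomorphism of $\mathbb Z_2$ onto the sphere, with inverse $z\mapsto(z-a-2^r)/2^{r+1}$. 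First I would record that, under \eqref{eq:f-invar}, formula \eqref{eq:g-rep} indeed defines a map $g\colon\mathbb Z_2\to\mathbb Z_2$: the numerator is divisible by $2^{r+1}$ because $f$ is $1$-Lipschitz and $a+2^r+2^{r+1}x\equiv a+2^r\pmod{2^{r+1}}$, so $f(a+2^r+2^{r+1}x)\equiv f(a+2^r)\equiv a+2^r\pmod{2^{r+1}}$; that $g$ is $1$-Lipschitz (already observed after \eqref{eq:f-rep}); and that, by \eqref{eq:f-rep} and the definition of $g$,
\[
f(\phi(x))=f(a+2^r+2^{r+1}x)=(a+2^r)+2^{r+1}g(x)=\phi(g(x))\qquad(x\in\mathbb Z_2),
\]
so that $f\circ\phi=\phi\circ g$ once $f$ is restricted to the $f$-invariant sphere.

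Next I would verify that $\phi$ is an isomorphism of the measure space $(\mathbb Z_2,\mu_2)$ onto the sphere carrying its normalized Haar measure $\nu$, i.e.\ $\nu(\mathbf X)=\mu_2(\mathbf X)/\mu_2(\mathbf S_{2^{-r}}(a))=2^{r+1}\mu_2(\mathbf X)$ for Borel $\mathbf X\subseteq\mathbf S_{2^{-r}}(a)$ (recall $\mu_2(\mathbf S_{2^{-r}}(a))=2^{-(r+1)}$). The point is that $\phi$ maps the ball $\mathbf B_{2^{-k}}(c)=c+2^k\mathbb Z_2$ onto the ball $\mathbf B_{2^{-(r+1+k)}}\big(a+2^r+2^{r+1}c\big)$ contained in the sphere, whose $\mu_2$-measure is $2^{-(r+1+k)}$ and whose $\nu$-measure is therefore $2^{-k}=\mu_2(\mathbf B_{2^{-k}}(c))$; moreover every ball contained in the sphere arises this way. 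Hence $\phi_*\mu_2$ and $\nu$ agree on all balls contained in the sphere, and since such balls generate the Borel $\sigma$-algebra of the sphere and two Borel probability measures agreeing on them coincide, $\phi_*\mu_2=\nu$.

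Combining the two steps, $f|_{\mathbf S_{2^{-r}}(a)}=\phi\circ g\circ\phi^{-1}$ with $\phi$ a measure isomorphism, whence $g$ is measure-preserving (resp.\ ergodic) on $(\mathbb Z_2,\mu_2)$ if and only if $f$ is measure-preserving (resp.\ ergodic) on $\mathbf S_{2^{-r}}(a)$ with respect to $\nu$; in particular, by the paper's convention that ergodic maps are also measure-preserving, the two ergodicity statements are equivalent. For the ``only if'' direction of the proposition, ergodicity of $f$ on the sphere presupposes $f(\mathbf S_{2^{-r}}(a))\subseteq\mathbf S_{2^{-r}}(a)$, which as noted just before the proposition is exactly \eqref{eq:f-invar}; for the ``if'' direction, \eqref{eq:f-invar} guarantees that the sphere is $f$-invariant and $g$ is a well-defined $1$-Lipschitz self-map of $\mathbb Z_2$, after which ergodicity of $g$ transfers to $f$ along the conjugacy. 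The only step requiring genuine care is the identity $\phi_*\mu_2=\nu$; the rest is the routine transfer of dynamical properties along a conjugation. (Alternatively, one can bypass measure theory and derive the equivalence directly from Theorem \ref{thm:erg-main}, by checking that $g$ is transitive modulo $2^k$ for all $k$ precisely when $f$ is transitive modulo $2^{r+1+k}$ on the coset $(a+2^r)+2^{r+1}\mathbb Z_2$ for all $k$, using the bijection $S\mapsto a+2^r+2^{r+1}S$ between $\mathbb Z/2^k\mathbb Z$ and that coset; the conjugacy argument is shorter.)
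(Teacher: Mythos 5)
Your proof is correct, but it takes a genuinely different route from the one in the paper. The paper's own proof is a two-line appeal to Lemma \ref{lem:transit} (quoted from \cite{ANKH}): for $p=2$ the set $\mathbf S_{2^{-r}}(a)\bmod 2^{r+1}$ is a singleton, so condition 1 of that lemma reduces to the invariance congruence \eqref{eq:f-invar}, while condition 2 (with $f^{p-1}=f$) translates, under the substitution $z=a+2^r+2^{r+1}x$, into transitivity of $g$ modulo $2^t$ for every $t$, which Theorem \ref{thm:erg-main} identifies with ergodicity of $g$ on $\mathbb Z_2$. You instead exhibit the affine homeomorphism $\phi(x)=a+2^r+2^{r+1}x$ as a conjugacy $f|_{\mathbf S_{2^{-r}}(a)}=\phi\circ g\circ\phi^{-1}$ and verify that $\phi$ carries $\mu_2$ to the normalized Haar measure on the sphere by checking agreement on balls (the $\pi$--$\lambda$ argument), after which ergodicity and measure-preservation transfer as invariants of measure-theoretic conjugacy; your handling of the two directions and of the role of \eqref{eq:f-invar} is accurate, and the identity $f\circ\phi=\phi\circ g$ does hold for the $g$ of \eqref{eq:g-rep} (not only for the normalized $g$ of \eqref{eq:f-rep}). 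Your argument is self-contained modulo standard measure theory, needs neither Lemma \ref{lem:transit} nor Theorem \ref{thm:erg-main}, and makes transparent that the statement is an instance of the general reduction from balls to $\mathbb Z_p$ sketched in the paper's introduction, valid for every prime $p$. What the paper's route buys is brevity given the cited lemma, plus the explicit ``transitive modulo $2^t$ for all $t$'' formulation that is consumed directly in the subsequent proof of Theorem \ref{thm:ergsph}; your closing parenthetical remark describes essentially that alternative.
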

\begin{proof} 
The first of conditions of the proposition just means that the
sphere $\mathbf S_{2^{-r}}(a)$ is $f$-invariant, see \eqref{eq:f-invar}. In view of that condition,
Condition 2 of Lemma \ref{lem:transit} holds then if and only if
the function $g$ is transitive modulo $2^t$ for all $t=1,2,3,\ldots$
However, the latter condition is equivalent to the ergodicity of the function $g$ on $\mathbb Z_2$ by Theorem \ref{thm:erg-main}.
\end{proof}
Now given a 1-Lipschitz function $f\colon\mathbb Z_2\rightarrow\mathbb Z_2$, in view of Theorem \ref{thm:comp}
and \eqref{eq:vdp-comp},
$f$ has a unique representation via van der Put series:
\begin{equation}
\label{eq:f-vdp}
f(x)=\sum_{m=0}^\infty B_f(m) \chi (m,x)=\sum_{m=0}^\infty 2^{\left\lfloor \log_2m \right\rfloor}b_f(m) \chi (m,x),
\end{equation}
where $b_f(m)\in\mathbb Z_2$; so $B_f(m)=2^{\left\lfloor\log_2m \right\rfloor}b_f(m)$
for all $m=0,1,2,\ldots$.
\begin{theo}
\label{thm:ergsph}
The function $f$ represented by van der Put series \eqref{eq:f-vdp} is ergodic on the sphere $\mathbf S_{2^{-r}}(a)$ if and only if the following conditions  hold simultaneously:
\begin{enumerate}
\item  $f(a+2^r)\equiv a+2^r+2^{r+1}\pmod{2^{r+2}}$;
\item $\left| b_f(a+2^r+m\cdot 2^{r+1}) \right|_2 =1$,
for $m\geq 1$;   
\item $b_f(a+2^r+2^{r+1})
\equiv 1 \pmod 4$;
\item $b_f(a+2^r+2^{r+2})+b_f(a+2^r+3\cdot 2^{r+1})\equiv 2 \pmod 4$;
\item $\sum_{m=2^{n-1}}^{2^n-1} 
b_f(a+2^r+m\cdot 2^{r+1})\equiv 0 \pmod 4,\; \text{for}\; n\geq 3$.
\end{enumerate}
\end{theo}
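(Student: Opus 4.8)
The plan is to reduce ergodicity on the sphere to ergodicity on all of $\mathbb Z_2$ and then to re-read the known criterion there as a condition on the van der Put coefficients $b_f$. By Proposition \ref{prop:ball-space}, $f$ is ergodic on $\mathbf S_{2^{-r}}(a)$ if and only if the sphere is $f$-invariant, i.e. $f(a+2^r)\equiv a+2^r\pmod{2^{r+1}}$, \emph{and} the $1$-Lipschitz map $g$ of \eqref{eq:g-rep} is ergodic on $\mathbb Z_2$. By Theorem \ref{thm:ergnew}, with condition~2 taken in the equivalent form of Remark \ref{rem:eq-er-sph}, ergodicity of $g$ is equivalent to the following conditions on the normalised van der Put coefficients $b_g(m)$ of $g$ (cf. \eqref{eq:vdp-comp}): (i) $b_g(0)\equiv 1\pmod 2$; (ii) $b_g(1)-b_g(0)\equiv 1\pmod 4$; (iii) $|b_g(m)|_2=1$ for $m\ge 2$; (iv) $b_g(2)+b_g(3)\equiv 2\pmod 4$; and (v) $\sum_{m=2^{n-1}}^{2^n-1}b_g(m)\equiv 0\pmod 4$ for $n\ge 3$. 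Thus the whole proof comes down to expressing the $b_g(m)$ in terms of the $b_f(\cdot)$ and then checking that (i)--(v) together with invariance are precisely conditions 1--5 of the statement.

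The key step is the identity relating $b_g$ to $b_f$. Assuming invariance, so that $g(x)=2^{-(r+1)}\bigl(f(a+2^r+2^{r+1}x)-(a+2^r)\bigr)$ is a genuine $1$-Lipschitz transformation of $\mathbb Z_2$, I would apply formula \eqref{eq:vdp-coef} to both $f$ and $g$. The computation rests on an elementary fact about base-$2$ expansions: since $a\in\{0,\dots,2^r-1\}$ the integer $a+2^r$ lies in $\{2^r,\dots,2^{r+1}-1\}$, so for $m\ge 1$ the binary representation of $M_m:=a+2^r+m\cdot 2^{r+1}$ consists of the $(r+1)$-digit block of $a+2^r$ in positions $0,\dots,r$ and the block of $m$ in positions $\ge r+1$, with no carry between the blocks. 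Hence $\lfloor\log_2 M_m\rfloor=r+1+\lfloor\log_2 m\rfloor$, and $M_m-2^{\lfloor\log_2 M_m\rfloor}=a+2^r+\bigl(m-2^{\lfloor\log_2 m\rfloor}\bigr)\cdot 2^{r+1}$; that is, deleting the leading binary digit of $M_m$ corresponds to deleting the leading digit of $m$. Substituting the formula for $g$ into \eqref{eq:vdp-coef} and comparing term by term with \eqref{eq:vdp-coef} written for $f$ at $M_m$, the telescoping differences match, and one gets $B_g(m)=2^{-(r+1)}B_f(M_m)$ for $m\ge 2$; cancelling the equal powers of $2$ yields $b_g(m)=b_f(a+2^r+m\cdot 2^{r+1})$ for all $m\ge 2$. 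The two low-index coefficients are handled directly: $b_g(0)=g(0)=2^{-(r+1)}\bigl(f(a+2^r)-(a+2^r)\bigr)$, and, writing $f(a+2^r+2^{r+1})-(a+2^r)$ as $\bigl(f(a+2^r+2^{r+1})-f(a+2^r)\bigr)+\bigl(f(a+2^r)-(a+2^r)\bigr)$ and applying \eqref{eq:vdp-coef} to the first bracket, $b_g(1)=g(1)=b_g(0)+b_f(a+2^r+2^{r+1})$.

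With these relations the translation is pure bookkeeping. Invariance says $f(a+2^r)-(a+2^r)\in 2^{r+1}\mathbb Z_2$, and then (i) says that $b_g(0)=2^{-(r+1)}\bigl(f(a+2^r)-(a+2^r)\bigr)$ is odd; jointly these two statements amount to $f(a+2^r)\equiv a+2^r+2^{r+1}\pmod{2^{r+2}}$, which is condition 1 (and condition 1 in turn forces invariance). Condition (ii) becomes $b_f(a+2^r+2^{r+1})\equiv 1\pmod 4$, which is condition 3 and which already implies $|b_f(a+2^r+2^{r+1})|_2=1$; hence (ii) and (iii) together are equivalent to conditions 2 and 3, the $m=1$ case of condition 2 being subsumed by condition 3. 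Condition (iv) becomes $b_f(a+2^r+2^{r+2})+b_f(a+2^r+3\cdot 2^{r+1})\equiv 2\pmod 4$ since $2\cdot 2^{r+1}=2^{r+2}$, which is condition 4. Finally, for $n\ge 3$ the summation range $2^{n-1}\le m\le 2^n-1$ lies inside $m\ge 2$, so condition (v) becomes $\sum_{m=2^{n-1}}^{2^n-1}b_f(a+2^r+m\cdot 2^{r+1})\equiv 0\pmod 4$, which is condition 5. For the converse, if condition 1 fails then either the sphere is not $f$-invariant, in which case $f$ does not map it into itself and so is not ergodic on it, or the sphere is invariant but $b_g(0)$ is even, so $g$ is not bijective modulo $2$ and hence not ergodic, whence $f$ is not ergodic on the sphere by Proposition \ref{prop:ball-space}.

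The step I expect to be the main obstacle is the base-$2$ bookkeeping behind the identity $B_g(m)=2^{-(r+1)}B_f(M_m)$: one must check carefully that the leading-digit deletion built into \eqref{eq:vdp-coef} acts compatibly on both sides of the comparison, and one must separate cleanly the generic range $m\ge 2$ from the low-index cases $m=0,1$, where \eqref{eq:vdp-coef} uses the bare value $f(m)$ instead of a difference. Everything past that is a direct substitution into the criteria of Theorem \ref{thm:ergnew} and Remark \ref{rem:eq-er-sph}.
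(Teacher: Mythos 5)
Your proposal is correct and takes essentially the same route as the paper's own proof: reduce to ergodicity of $g$ on $\mathbb Z_2$ via Proposition \ref{prop:ball-space}, establish $b_g(m)=b_f(a+2^r+m\cdot 2^{r+1})$ for $m\ge 2$ together with $b_g(1)-b_g(0)=b_f(a+2^r+2^{r+1})$ from \eqref{eq:vdp-coef}, and translate the conditions of Theorem \ref{thm:ergnew} (with Remark \ref{rem:eq-er-sph}) term by term. The base-$2$ digit bookkeeping you flag as the main obstacle is exactly the computation the paper carries out in \eqref{eq:g-f-m}--\eqref{eq:g-f-m-fin}, and your handling of the low-index cases $m=0,1$ matches \eqref{eq:g-f-0}, \eqref{eq:g-f-1} and \eqref{eq:b1-b0}.
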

\begin{proof}
Represent $f$ 
by \eqref{eq:f-rep} and $g$ by \eqref{eq:g-rep},
then calculate van der Put coefficients $B_m$ of the function 
$g(x)=\sum_{m=0}^\infty B_m \chi (m,x)$ as follows.  Given $m\in\{2,3,\ldots\}$, denote $\acute
m=m-2^{\lfloor\log_2m\rfloor}$; then using formula \eqref{eq:vdp-coef} for
van der Put coefficients $B_m$ and $B_f(m)$ we find that 
\begin{align}
B_0=& g(0)= \frac{f(a+2^r)-
(a+2^r)
}{2^{r+1}};\label{eq:g-f-0} \\
B_1=& g(1)= \frac{f(a+2^r+2^{r+1})-
(a+2^r)
}{2^{r+1}};\label{eq:g-f-1} \\
B_m=B_{\acute{m}+2^n}=& g(\acute{m}+2^n)-g(\acute{m}) =\notag \\
=& \frac{f(a+2^r+2^{r+1}\acute{m}+2^{n+r+1})-
(a+2^r)
}{2^{r+1}} -\notag \\
& 
\frac{f(a+2^r+2^{r+1}\acute{m})-
(a+2^r)
}{2^{r+1}}=\notag \\
& \frac{f(a+2^r+2^{r+1}\acute{m}+2^{n+r+1})-f(a+2^r+2^{r+1}\acute{m})}{2^{r+1}} =\notag\\
& \frac{B_f(a+2^r+2^{r+1}\acute{m}+2^{n+r+1})}{2^{r+1}},\label{eq:g-f-m} \;
(m\geq2, n=\lfloor\log_2m\rfloor).
\end{align} 
As $g$ is 1-Lipschitz, Theorem  \ref{thm:comp} yields that for every $k=0,1,2,\ldots$ there exists $b_k\in\mathbb Z_p$ such that  $B_k=b_k2^{\lfloor\log_2k\rfloor}$.
So from \eqref{eq:g-f-m} it follows that if $m\ge 2$ and $n=\lfloor\log_2m\rfloor$
then
\begin{equation}
\label{eq:g-f-m-fin}
b_m=\frac{B_m}{2^n}=\frac{B_f(a+2^r+2^{r+1}\acute{m}+2^{n+r+1})}{2^{n+r+1}}=b_f{(a+2^r+ m\cdot2^{r+1})}.
\end{equation}
Now apply Theorem \ref{thm:ergnew}.  In view of \eqref{eq:g-f-m-fin}, condition  3 (for $m\ge
2$) of  Theorem \ref{thm:ergnew} 
is equivalent to condition 2 of the  theorem under proof.   By the same reason, conditions 4 and 5  of Theorem \ref{thm:ergnew} 
accordingly are equivalent to conditions 4 and 5 of the  theorem under  proof. Combining  \eqref{eq:g-rep} with \eqref{eq:vdp-coef}, we see that condition 1 of Theorem \ref{thm:ergnew}
is equivalent to condition 1 of the  theorem under proof.

Finally, combining \eqref{eq:g-f-0} and \eqref{eq:g-f-1} with \eqref{eq:vdp-coef}
we see that
\begin{multline}
\label{eq:b1-b0}
b_1-b_0=g(1)-g(0)=\frac{f(a+2^r+2^{r+1})-f(a+2^r)}{2^{r+1}}=\\
\frac{B_f(a+2^r+2^{r+1})}{2^{r+1}}=b_f(a+2^r+2^{r+1})
\end{multline}
By Remark \ref{rem:eq-er-sph} to Theorem \ref{thm:ergnew},  
\eqref{eq:b1-b0} proves condition 3 (as well
as condition 1 for $m=1$) of the theorem under proof. 
\end{proof}
\section{Ergodicity  of perturbed  monomial systems on 2-adic spheres around
1}
In this section, we study ergodicity of a function  of the form $f(x)=x^s+2^{r+1}u(x)$ on the sphere $\mathbf S_{2^{-r}}(1)=\left\{1+2^r+2^{r+1}x\colon x\in \mathbb Z_2\right\}$, $r\ge
1$, $s\in\mathbb N$. The
perturbation function $u$ is assumed to be 1-Lipschitz.

\begin{theo}
\label{thm:pert}
Let $u\colon \mathbb Z_2\rightarrow \mathbb Z_2$ be an arbitrary 1-Lipschitz function, let
$s,r\in\mathbb N$.
The function $f(x)=x^s+2^{r+1}u(x)$ is ergodic on the sphere $\mathbf S_{2^{-r}}(1)=\left\{1+2^r+2^{r+1}x\colon x\in \mathbb Z_2\right\}$ 
 if and only if $s\equiv 1 \pmod 4$ and $u(1)\equiv 1 \pmod 2$.
\end{theo}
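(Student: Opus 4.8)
The plan is to apply Theorem~\ref{thm:ergsph} to $f(x)=x^s+2^{r+1}u(x)$ with $a=1$. First I would note that $f$ is $1$-Lipschitz, being the sum of the polynomial $x^s$ and of $2^{r+1}u$ with $u$ $1$-Lipschitz, so that $|2^{r+1}u(x)-2^{r+1}u(y)|_2\le|x-y|_2$. Then I would dispose of the case of even $s$: since $(1+2^r+2^{r+1}x)^s\equiv(1+2^r)^s\equiv1+s2^r\pmod{2^{r+1}}$, the sphere $\mathbf S_{2^{-r}}(1)$ is $f$-invariant if and only if $s$ is odd; hence for even $s$ the map $f$ does not even act on the sphere, it is not ergodic there, and since then $s\not\equiv1\pmod4$ both sides of the asserted equivalence are false. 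So from now on I may assume $s$ odd and invoke Theorem~\ref{thm:ergsph}.

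The heart of the argument is the computation of the van der Put coefficients $b_f$ at the indices that occur in Theorem~\ref{thm:ergsph}. By uniqueness and linearity of the van der Put expansion, $B_f(k)=B_{x^s}(k)+2^{r+1}B_u(k)$ for every $k$; dividing by $2^{\lfloor\log_2 k\rfloor}$, which divides both summands by Theorem~\ref{thm:comp}, gives $b_f(k)=b_{x^s}(k)+2^{r+1}b_u(k)$, so that $b_f(k)\equiv b_{x^s}(k)\pmod4$ (here $r\ge1$ is used) and $b_f(k)$ is a unit exactly when $b_{x^s}(k)$ is. For $k=1+2^r+m2^{r+1}$ with $m\ge1$ one checks $\lfloor\log_2 k\rfloor=r+\lfloor\log_2 m\rfloor+1$ and $k-2^{\lfloor\log_2 k\rfloor}=1+2^r+2^{r+1}\acute m$ with $\acute m=m-2^{\lfloor\log_2 m\rfloor}$, so by \eqref{eq:vdp-coef}, writing $c_1=1+2m$ and $c_2=1+2\acute m=c_1-2^{\lfloor\log_2 m\rfloor+1}$ (both odd),
\[
B_{x^s}(k)=(1+c_12^r)^s-(1+c_22^r)^s .
\]
Expanding both powers by the binomial theorem and using $c_1^j-c_2^j=(c_1-c_2)\sum_{i=0}^{j-1}c_1^i c_2^{j-1-i}$, I would obtain $b_{x^s}(k)=s+\binom{s}{2}2^r(c_1+c_2)+\binom{s}{3}2^{2r}(c_1^2+c_1c_2+c_2^2)+\cdots$; since $c_1+c_2$ is even and $2r\ge2$, every term past the first is divisible by $4$, whence $b_{x^s}(k)\equiv s\pmod4$. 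Thus $b_f(1+2^r+m2^{r+1})\equiv s\pmod4$ for all $m\ge1$, and this coefficient is a unit if and only if $s$ is odd.

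With this in hand, conditions~2--5 of Theorem~\ref{thm:ergsph} become explicit: condition~2 says all these coefficients are units, i.e.\ $s$ is odd; condition~4 reads $2s\equiv2\pmod4$, again $s$ odd; condition~5 reads $2^{n-1}s\equiv0\pmod4$ for $n\ge3$, which holds automatically; and condition~3 reads $s\equiv1\pmod4$. It remains to treat condition~1, namely $f(1+2^r)\equiv1+2^r+2^{r+1}\pmod{2^{r+2}}$. Here $f(1+2^r)=(1+2^r)^s+2^{r+1}u(1+2^r)$, and since $u$ is $1$-Lipschitz and $1+2^r\equiv1\pmod2$ we have $u(1+2^r)\equiv u(1)\pmod2$, so the perturbation term contributes $2^{r+1}(u(1)\bmod2)$ modulo $2^{r+2}$. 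Using $s\equiv1\pmod4$ (forced by conditions~2--4), the binomial theorem gives $(1+2^r)^s\equiv1+2^r\pmod{2^{r+2}}$ --- for $r\ge2$ because $(s-1)2^r\equiv0\pmod{2^{r+2}}$ and the higher terms carry a factor $2^{2r}$, and for $r=1$ because $3^s\equiv3=1+2^1\pmod8$ for odd $s$. Hence condition~1 reduces to $2^{r+1}(u(1)\bmod2)\equiv2^{r+1}\pmod{2^{r+2}}$, i.e.\ $u(1)\equiv1\pmod2$. Combining the five equivalences yields exactly ``$s\equiv1\pmod4$ and $u(1)\equiv1\pmod2$'', which is the theorem.

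I expect the genuine obstacle to be the careful bookkeeping in the two binomial expansions at the boundary value $r=1$, where the term $\binom{s}{2}2^{2r}$ is divisible only by $2^{r+1}$, not by $2^{r+2}$: one must check by hand that it still drops out modulo $4$ in the formula for $b_{x^s}(k)$ (it does, because $c_1+c_2$ is even) and modulo $2^{r+2}$ in $f(1+2^r)$ (it does for odd $s$, since $3^s\equiv3\pmod8$). Everything else is routine tracking of $2$-adic valuations through formula \eqref{eq:vdp-coef}.
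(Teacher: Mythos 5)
Your proof is correct, but it follows a genuinely different route from the paper's. The paper does not verify the five conditions of Theorem~\ref{thm:ergsph} coefficient by coefficient; instead it invokes Proposition~\ref{prop:ball-space} to pass to the conjugated map $g(x)=2^{-(r+1)}\bigl(f(1+2^r+2^{r+1}x)-(1+2^r)\bigr)$ on $\mathbb Z_2$, strips off the $2^{r+1}\xi(x)$ perturbation, and is left with a \emph{polynomial} $v(x)=u(1+2^r)+xs+2^{r-1}(1+2x)^2\binom{s}{2}+\cdots$; it then applies Larin's theorem that a polynomial over $\mathbb Z_2$ is ergodic iff it is transitive modulo $8$, reducing everything to the affine criterion $x\mapsto a+bx$ ergodic iff $a\equiv1\pmod 2$, $b\equiv 1\pmod 4$, with a case split $r\ge 3$, $r=2$, $r=1$. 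You instead compute the van der Put coefficients of $x^s$ directly at the indices $1+2^r+m2^{r+1}$, and your central identity $b_{x^s}(1+2^r+m2^{r+1})=s+\binom{s}{2}2^r(c_1+c_2)+\cdots\equiv s\pmod 4$ (using that $c_1+c_2$ is even, which is exactly what saves the boundary case $r=1$) makes conditions 2--5 of Theorem~\ref{thm:ergsph} collapse to $s\equiv 1\pmod 4$ uniformly in $r$ and $m$; condition 1 then isolates $u(1)\equiv1\pmod 2$. Your treatment of even $s$ via failure of invariance, and your observation that $b_f(k)\equiv b_{x^s}(k)\pmod 4$ since $r\ge1$, are both sound. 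What your approach buys is independence from Larin's transitivity-mod-$8$ theorem and from the case analysis on $r$; what the paper's approach buys is reusability of the intermediate polynomial $v$ (which it explicitly notes it wants ``to obtain a formula that will be used later in the current proof'') and a proof that leans only on standard ergodicity criteria for polynomials and affine maps rather than on the fine structure of van der Put coefficients.
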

\begin{proof}
As $u$ is 1-Lipschitz, $u(1+2^r+2^{r+1}x)=u(1+2^r)+2^{r+1}\xi(x)$ for a suitable
map $\xi\colon\mathbb Z_2\rightarrow\mathbb Z_2$; it is an exercise to prove that $\xi$ is also 1-Lipschitz. Combining \eqref{eq:g-rep}, \eqref{eq:f-rep} and Newton's binomial we get
\begin{multline}
\label{eq:pert-rep}
g(x)=\frac{f(1+2^r+2^{r+1}x)-
(1+2^r)
}{2^{r+1}}=\\ 
u(1+2^r+2^{r+1}x)+ \frac{1}{2^{r+1}}\left( (1+2^r+2^{r+1}x)^s-(1+2^r)\right)=\\
2^{r+1}\xi(x)+u(1+2^r)+xs+2^{r-1}(1+2x)^2\binom{s}{2}+2^{2r-1}(1+2x)^3\binom{s}{3}+\cdots
\end{multline}
By Proposition \ref{prop:ball-space} the function $f$ is ergodic on the sphere
$\mathbf S_{2^{-r}}(1)$ if and only  the function $g$ is ergodic on $\mathbb Z_p$.
As $r\ge 1$ and $\xi$ is 1-Lipschitz, the right-hand side of \eqref{eq:pert-rep}
is ergodic on $\mathbb Z_p$ if and only if the polynomial
$$
v(x)=u(1+2^r)+xs+2^{r-1}(1+2x)^2\binom{s}{2}+2^{2r-1}(1+2x)^3\binom{s}{3}+\cdots
$$
in variable $x$ is ergodic on $\mathbb Z_p$: This follows from \cite[Proposition
9.29]{ANKH} where it is shown in particular that given  1-Lipschitz functions
$t,w\colon\mathbb Z_2\rightarrow\mathbb Z_2$, the function $t+4w$ is ergodic on $\mathbb Z_2$ if and
only if the function $t$ is ergodic on $\mathbb Z_2$. Nonetheless, we would
better deduce the same claim 
from Theorem \ref{thm:ergsph} to obtain a formula that will be used later
in the current proof.

We first note that conditions 2-5 of Theorem \ref{thm:ergsph}
are all ``modulo 4'', meaning the conditions do not depend on terms of order greater than
1 of canonic 2-adic representations of van der Put coefficients of the function.
Then, condition 1 of Theorem  \ref{thm:ergsph} holds if and only if the following congruence holds:
$$
f(1+2^r)=(1+2^r)^s+2^{r+1}u(1+2^r)\equiv 1+2^r+2^{r+1}\pmod{2^{r+2}}.
$$
As $u$ is 1-Lipschitz, the latter congruence is equivalent to the congruence
$1+2^rs+2^{2r}\binom{s}{2}+2^{r+1}u(1)\equiv 1+2^r+2^{r+1}\pmod{2^{r+2}}$
which by Newton's binomial is equivalent to the congruence
\begin{equation}
\label{eq:pert-1}
s+2^r\binom{s}{2}+2u(1)\equiv 3\pmod 4
\end{equation}
(and therefore $s\equiv 1\pmod 2$: $s=2\tilde s+1$ for suitable $\tilde s\in\mathbb N_0$).
So we conclude that the summand $2^{r+1}\xi(x)$ has no effect on ergodicity
of the function $g$ on $\mathbb Z_2$: The latter function is ergodic if and only if
the polynomial $v$ is ergodic on $\mathbb Z_2$.

Further, according to \cite{Lar} (see also \cite[Corollary 4.71]{ANKH}) a polynomial over $\mathbb Z_2$ is ergodic on $\mathbb Z_2$ if and only if it is transitive modulo 8. This
already proves the theorem if $r\ge 3$ since in the latter case 
$v(x)\equiv u(1+2^r)+xs\pmod 8$; however, in force of a well know criterion of ergodicity
of affine maps (see e.g. \cite[Theorem 4.36]{ANKH}), the affine map $x\mapsto u(1+2^r)+xs$
is ergodic on $\mathbb Z_2$ if and only if $u(1+2^r)\equiv 1\pmod 2$ (so $u(1)\equiv
1\pmod 2$ as $u$ is 1-Lipschitz) and $s\equiv
1\pmod 4$.

To complete the proof of the theorem, only  cases $r=1$ and $r=2$ are to be considered.
If $r=2$ then $v(x)\equiv u(1+2^2)+x^s+2\binom{s}{2}+4\binom{s}{2}x\pmod
8$ and we conclude the proof as in the case $r\ge 2$. In the remaining case
$r=1$ we have that
\begin{multline}
\label{eq:pol-2}
v(x)\equiv u(1+2)+xs+\binom{s}{2}+2\binom{s}{2}x+4\binom{s}{2}x^2+\\
2\binom{s}{3}+6\binom{s}{3}x+4\binom{s}{4}\pmod 8.
\end{multline}
However, if $r=1$ then congruence \eqref{eq:pert-1} implies the congruence
$1+2\tilde s+2(2\tilde s+1)\tilde s+2u(1)\equiv 3\pmod 4$ (remind that $s=2\tilde s+1$)
which is equivalent to the congruence $u(1)\equiv 1\pmod 2$. Yet the latter
congruence implies that necessarily $\tilde s\equiv 0 \pmod 2$: If otherwise, then from \eqref{eq:pol-2} it follows that $v(x)\equiv u(1)+xs+1\equiv x\pmod
2$ which means that $v(x)$ is not transitive modulo 2 and thus the polynomial
$v(x)$ can not be ergodic on $\mathbb Z_2$ (cf. Theorem \ref{thm:erg-main}).  On
the other hand, if $u(1)\equiv 1\pmod 2$ and $s\equiv 1\pmod 4$ (i.e., $s=4\hat
s+1$ for some $\hat s\in\mathbb N_0$) then the
polynomial in the right-hand side of \eqref{eq:pol-2} is congruent modulo
8 to the polynomial $u(3)+x+6\hat s$ which induces an ergodic affine transformation
on $\mathbb Z_2$ (cf. \cite[Theorem 4.36]{ANKH}) since $u(3)\equiv u(1)\equiv 1\pmod 2$ (recall
that $u$ is 1-Lipschitz). Thus, the polynomial $v(x)$ (and whence the function
$g$) are also ergodic on $\mathbb Z_2$. This completes the proof.
\end{proof}
\section*{Acknowledgement}
The first of authors would like to
express his gratitude  for hospitality
during his stay in Linnaeus University.

\end{document}